\title[Differences between ranks\dots]{On differences between the border rank and the smoothable rank of a polynomial}
\subjclass[2010]{Primary: 14M17, Secondary: 14C05, 15A69, 14Q20}
\keywords{secant variety, border rank, smoothable rank, cactus rank}
\begin{document}
 \author[W.~Buczy\'nska]{Weronika Buczy\'nska}
 \address{Weronika Buczy\'nska\\
 Institute of Mathematics of the
 Polish Academy of Sciences\\
 ul. \'Sniadeckich 8\\
 P.O. Box 21\\
 00-956 Warszawa, Poland}
  \email{wkrych@mimuw.edu.pl}

\thanks{W.~Buczy\'nska is supported 
   by the research project
  ``Rangi i rangi brzegowe wielomian\'ow oraz r\'ownania rozmaito\'sci siecznych''
   funded by the Polish government science programme in 2012-2014.	 }

 \author[J.~Buczy\'nski]{Jaros\l{}aw Buczy\'nski}
 \address{Jaros\l{}aw Buczy\'nski \\
 Institute of Mathematics of the
 Polish Academy of Sciences\\
 ul. \'Sniadeckich 8\\
 P.O. Box 21\\
 00-956 Warszawa, Poland}
 \email{jabu@mimuw.edu.pl}

 \thanks{J.~Buczy\'nski is supported by the project
        ``Secant varieties, computational complexity, and toric degenerations''
        carried out under Homing Plus programme of the Foundation for Polish Science,
        funded in part by the European Union, Regional Development Fund. 
        J.~Buczy\'nski is also supported by the scholarship ``START'' of the Foundation for Polish Science.}

\begin{abstract}
   We consider higher secant varieties to Veronese varieties. 
   Most points on the $r$-th secant variety are represented by a finite scheme of length $r$ contained in the Veronese variety
    --- in fact, for generic point, it is just a union of $r$ distinct points.
   A modern way to phrase it is: the smoothable rank is equal to the border rank for most polynomials.
   This property is very useful for studying secant varieties, especially, whenever the smoothable rank is equal to the border rank for all points of the secant variety in question.
   In this note we investigate those special points for which the smoothable rank is not equal to the border rank.
   In particular, we show an explicit example of a cubic in five variables with border rank $5$ and smoothable rank $6$.
   We also prove that all cubics in at most four variables have the smoothable rank equal to the border rank.
\end{abstract}

\maketitle

\date{20 January 2014}

\section{Introduction}

Throughout the paper we work over an algebraically closed field $k$ of characteristic $0$.

Let $X \subset \PP^N$ be an embedded projective variety.
We will later concentrate on the case in which $X$ is a Veronese variety, but for a while we consider a more general situation.
The $r$-th \emph{secant variety} of $X$ is:
\[
  \sigma_r(X) = \bigcup \overline{\set{\langle R \rangle : R = \setfromto{x_1}{x_r}, x_i \in X}} \subset \PP^N,
\]
where $\langle R \rangle$ is the linear span of the finite set $R$. In particular, $\sigma_1(X) = X$. 
Less formally, but in plain English, the $r$-th secant variety of $X$ consists of linear combinations of at most $r$ points on $X$ and their limits.
We emphasise \emph{``\dots and their limits''}. 
It is very difficult to conceive these limits, and in fact, it is unlikely that such understanding can be achieved in general.
For $r=2$, the description of the limits is well known, see Section~\ref{sect_ranks_two} for an overview. The case when $r=3$ and $X$ is a special kind of homogeneous space 
   (\emph{generalised cominuscule}) is treated in \cite{landsberg_jabu_third_secant}.
With the exception of the case in which $X$ is a Veronese variety (or its subvariety), very few other results are known.

Naively, taking a family of points on $\sigma_r(X)$ parametrised by one parameter $t$, say $f(t) \in \langle R(t) \rangle$ with each $R(t) = \setfromto{x_{1}(t)}{x_{r}(t)}$  
  for all $t \ne 0$, 
  one could hope that
\begin{equation}  \label{equ_naive_hope}
  \lim_{t \to 0} f(t) \in \langle \lim_{t \to 0} R(t)\rangle.
\end{equation}
Here $\lim_{t \to 0} R(t)$ means the flat limit of schemes, or, in other words, the limit in the Hilbert scheme,
  and for a scheme $Q \subset X$,  $\langle Q\rangle$ denotes the linear span of the scheme.
Since the Hilbert scheme is projective, this limit always exists.
Unfortunately, $\dim \langle R(t) \rangle$ (for general $t$) is not always equal to $\dim \lim_{t \to 0} \langle R(t) \rangle$,
  and \eqref{equ_naive_hope} may fail to hold.
This motivates the following definitions:

For a point $p \in \PP^N$ denote by $\borderrk_X(p)$ the $X$-\emph{border rank} of $p$, that is the minimal $r$ such that $p \in \sigma_r(X)$. 
This definition is fairly standard, see for instance \cite{landsberg_teitler_ranks_and_border_ranks_of_symm_tensors}, \cite{landsberg_border_rank_of_multiplication}.

The $X$-\emph{smoothable rank} of $p \in \PP^N$ is denoted by $sr_X(p)$ and it is the minimal integer $r$ 
  such that there exists a finite scheme $R \subset X$ of length $r$ which is smoothable in $X$,
  with $p \in \langle R \rangle$.
This definition appeared in \cite{ranestad_schreyer_on_the_rank_of_a_symmetric_form}, 
  and was motivated by the results in \cite{bernardi_gimigliano_ida}, \cite{jabu_ginensky_landsberg_Eisenbuds_conjecture}, \cite{nisiabu_jabu_cactus}.
The $X$-\emph{cactus rank} is another variant considered in \cite{ranestad_schreyer_on_the_rank_of_a_symmetric_form}, 
  \cite{bernardi_ranestad_cactus_rank_of_cubics}. We define it in Section~\ref{sect_general_variety},
  however in most cases considered in this article, the $X$-cactus rank is equal to the $X$-smoothable rank.

We always have $\borderrk_X(p) \le sr_X(p)$.
Somehow the points $p$ for which $\borderrk_X(p)= sr_X(p)$ are ``easier'' to treat.
Particularly, whenever $\borderrk_X= sr_X$ for all points in $\PP^N$, or at least for all points in $\sigma_r(X)$ for some $r$,
  the secant varieties are more ``tame'', see for instance \cite{bernardi_gimigliano_ida}, \cite{nisiabu_jabu_cactus}, \cite{jabu_ginensky_landsberg_Eisenbuds_conjecture}.
The purpose of this note is to present a few examples when $\borderrk_X(p) < sr_X(p)$, which we call ``wild'' examples. 

Note that the definition of $X$-smoothable rank \emph{does not} involve limits, at least not in any direct way.
One may argue that the definition of smoothability does involves limit, but in many cases of interest,
   all finite subschemes of $X$ of a given length are in a single irreducible component of the Hilbert scheme, 
   and they are all smoothable.
Informally, we say that the smoothable rank is an (imperfect) way 
   to get rid of limits when considering the secant varieties.

When $\borderrk_X(p) \le 2$, then $sr_X(p) = \borderrk_X(p)$  (see Section~\ref{sect_ranks_two}). 
Already when $\borderrk_X(p) = 3$, it is possible to construct explicit examples of $X$ and $p$ with $sr_X(p) > 3$ (see Section~\ref{sect_ranks_three}).
The situation becomes  more interesting, if we limit ourselves to the case in which $X$ is a Veronese variety.

Let $V$ be a vector space of dimension $n$. We consider homogeneous polynomials of degree $d$
  in $n$ variables, which form a basis of $V$. That is, we consider elements of $S^d V$.
Let $v_d(\PP V) \subset \PP (S^d V)$ be the Veronese variety:
\[
   v_d(\PP V) = \set{[v^d] \in \PP (S^d V) : v \in V}.
\]
This proposition follows easily from general knowledge and published articles, see Section~\ref{sect_tameness_principle} for a discussion:
\begin{prop}\label{prop_low_border_rank_is_tame}
   For  $X = v_d(\PP V) \subset \PP (S^d V)$ and $f \in S^d V$ of $X$-border rank at most $\max (4, d+1)$,
     we always have $sr_X([f]) = \borderrk_X([f])$ (all such polynomials are ``tame'').
\end{prop}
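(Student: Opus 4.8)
The plan is to prove this by reducing to a finite list of normal forms for polynomials of low border rank, and then verifying smoothability of a representing scheme in each case.

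The plan is to produce, for $r := \borderrk_X([f])$, a \emph{smoothable} length-$r$ scheme whose span contains $[f]$; since $\borderrk_X \le sr_X$ always, this forces $sr_X([f]) = r$. The natural candidate is the flat limit of the reduced point-configurations that witness the border rank, and the whole point will be that the bound $r \le \max(4,d+1)$ is exactly what prevents the dimension jump responsible for the failure of \eqref{equ_naive_hope}.

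First I would dispose of the degenerate degrees. For $d=1$ the statement is trivial, and for $d=2$ the secant varieties of $v_2(\PP V)$ are the (closed) determinantal loci of quadrics of bounded rank, so $\borderrk_X$ equals the ordinary rank; a rank-$r$ quadric is a sum of $r$ squares, hence spanned by $r$ reduced points and tame for \emph{every} $r$, in particular for $r \le 4$. Thus I may assume $d \ge 3$, so that $\max(4,d+1) = d+1$ and $r \le d+1$.

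Now the main step. Writing $[f]$ as a limit $\lim_{t \to 0}[f(t)]$ with $[f(t)] \in \langle v_d(S(t))\rangle$ for reduced length-$r$ schemes $S(t) \subset \PP V$, I would (after a finite base change) extend the family over $t=0$ inside $\mathrm{Hilb}^r(\PP V)$ and let $S_0$ be the flat limit. Then $S_0$ has length $r$ and, being a limit of reduced schemes, is smoothable, so $v_d(S_0) \subset X$ is smoothable in $X$. It remains to see $[f] \in \langle v_d(S_0)\rangle$. Here I would invoke the classical fact that a finite scheme of length $\le d+1$ imposes independent conditions on forms of degree $d$, i.e.\ $v_d$ sends it to a linearly independent set with $h^1(\mathcal I(d)) = 0$; this applies to every $S(t)$ and to $S_0$, so $\dim \langle v_d(S(t))\rangle = \dim\langle v_d(S_0)\rangle = r-1$. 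Constancy of this dimension turns the a priori rational map $\mathrm{Hilb}^r(\PP V) \dashrightarrow \mathrm{Gr}(r, S^d V)$, $[S] \mapsto \langle v_d(S)\rangle$, into a morphism along our curve, so $\langle v_d(S_0)\rangle = \lim_{t\to 0}\langle v_d(S(t))\rangle$ and therefore $[f] = \lim [f(t)] \in \langle v_d(S_0)\rangle$. This yields $sr_X([f]) \le r$, completing the proof.

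The hard part will be the no-dimension-jump step together with the sharpness of the threshold. The independent-conditions lemma is elementary for reduced configurations (for each point take a product of at most $r-1 \le d$ hyperplanes separating it from the rest), but for the possibly non-reduced limit $S_0$ I would prove it from the bound $\deg S_0 \le d+1$ on the Castelnuovo--Mumford regularity, and I would still have to upgrade the span map to a genuine morphism using properness of $\mathrm{Hilb}^r(\PP V)$. I expect this to be where the argument is delicate, and it is genuinely sharp: once $r > d+1$ the limiting span can drop dimension, \eqref{equ_naive_hope} fails, and the smoothable flat limit need no longer contain $[f]$ --- precisely the phenomenon behind the wild cubic ($d=3$, $r = 5 > d+1$) of \cite{bernardi_ranestad_cactus_rank_of_cubics} and Section~\ref{sect_ranks_three}.
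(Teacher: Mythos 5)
Your proposal is correct, and it follows the same route as the paper: dispose of $d\le 2$ by the linear algebra of quadrics (where all notions of rank coincide), and for $d\ge 3$ note that $\max(4,d+1)=d+1$ and reduce to the principle that border rank at most $d+1$ forces $sr_X=\borderrk_X$. The only real difference is self-containedness: the paper states this as Principle~\ref{prin_tameness} and simply cites \cite[Prop.~2.5]{nisiabu_jabu_cactus} and \cite[Prop.~11]{bernardi_gimigliano_ida} for its proof, whereas you prove it directly; your argument --- pass to a one-parameter family of reduced length-$r$ schemes $S(t)$ witnessing the border rank, take the flat limit $S_0$ (smoothable by construction), use that a finite scheme of length $r\le d+1$ imposes independent conditions on degree-$d$ forms (the ideal sheaf of a length-$r$ scheme is $r$-regular, so $h^1(\mathcal{I}_{S_0}(d))=0$ when $d\ge r-1$), and conclude from constancy of the span dimension that the span map is a morphism along the curve, so $[f]=\lim [f(t)]\in\lim\langle v_d(S(t))\rangle=\langle v_d(S_0)\rangle$ --- is essentially the proof found in those references, and each step is sound. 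One small attribution slip in your closing remark: the wild cubic with $d=3$, $r=5$ is the paper's own Theorem~\ref{thm_wild_and_tame_cubics}, while \cite{bernardi_ranestad_cactus_rank_of_cubics} concerns the opposite phenomenon (cactus rank strictly below border rank for general cubics in many variables); this does not affect the proof itself.
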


We prove the following theorem for cubic polynomials.
\begin{thm}\label{thm_wild_and_tame_cubics}
   Consider $X = v_3(\PP V) \subset \PP (S^3 V)$.
   \begin{itemize}
     \item If $\dim V \le 4$, then $sr_X([f]) = \borderrk_X([f])$ for all $f \in S^3 V$ (all such polynomials are ``tame'').
     \item If $\dim V \ge 5$, then a polynomial $f\in S^3 V$ with 
           $sr_X([f]) > \borderrk_X([f])$ exists, or less formally, ``wild'' polynomials exist.
   Specifically, if $\dim V =5$ and $\set{x_0, x_1, y_0, y_1, y_2}$ is a basis of $V$,
      then 
   \[
      f= x_0^2 \cdot y_0 - (x_0 + x_1)^2 \cdot y_1 + x_1^2 \cdot y_2 
   \]
   is ``wild'', with $\borderrk_X([f]) = 5$ and $sr_X([f]) =6$. 
   Furthermore, $r(f)$ is $9$.
   \end{itemize}
\end{thm}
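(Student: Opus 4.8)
The plan is to treat the explicit form first and the tameness statement afterward, throughout using the apolarity correspondence: $[f]\in\langle R\rangle$ is equivalent to the inclusion $I_R\subseteq f^{\perp}$ of the homogeneous ideal of $R\subseteq\PP V$ into the apolar ideal $f^{\perp}\subseteq\operatorname{Sym}V^{*}$. Write $\ell_0=x_0$, $\ell_1=x_0+x_1$, $\ell_2=x_1$, so the three linear forms are collinear, $\ell_1=\ell_0+\ell_2$, and $f=\ell_0^{2}y_0-\ell_1^{2}y_1+\ell_2^{2}y_2$. For the lower bound on the border rank I would compute the first catalecticant $\mathrm{Cat}_{1,2}(f)\colon V^{*}\to S^2V$, $\theta\mapsto\theta\lrcorner f$; its image is spanned by the five first partials $\partial f/\partial x_0,\partial f/\partial x_1,\partial f/\partial y_0,\partial f/\partial y_1,\partial f/\partial y_2$, which are manifestly independent (the last three being the independent quadrics $x_0^2,(x_0+x_1)^2,x_1^2$), so $\operatorname{rank}\mathrm{Cat}_{1,2}(f)=5$ and $\borderrk_X([f])\ge 5$. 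For the matching upper bound I exploit the collinearity through the identity $-\ell_0^{3}+\ell_1^{3}-\ell_2^{3}=3\ell_0\ell_1\ell_2=3x_0x_1(x_0+x_1)$, which gives
\[
  f=\lim_{t\to 0}\frac{1}{3t}\Bigl((x_0+ty_0)^{3}-\bigl((x_0+x_1)+ty_1\bigr)^{3}+(x_1+ty_2)^{3}+3x_0x_1(x_0+x_1)\Bigr).
\]
Since $3x_0x_1(x_0+x_1)$ is a binary cubic with three distinct roots it has rank $2$, say $\mu^{3}+\nu^{3}$; substituting, the bracket becomes a combination of five cubes, exhibiting $[f]$ as a limit of points of $\sigma_5$, so $\borderrk_X([f])\le 5$. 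This is precisely the mechanism circumventing the naive hope \eqref{equ_naive_hope}: three of the six points of the tangential representation are traded for two via the collinearity.

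For the smoothable rank, the upper bound is geometric: the length-$6$ scheme $R=Z_0\cup Z_1\cup Z_2$, with $Z_i$ the curvilinear length-$2$ jet at $[\ell_i]$ in the direction $y_i$, is smoothable (a union of smooth curvilinear schemes) and satisfies $f\in\langle R\rangle$ because $\ell_i^{2}y_i\in\langle Z_i\rangle$; hence $sr_X([f])\le 6$. Since $sr_X\ge\operatorname{cr}_X$, for the lower bound it suffices to prove the cactus rank is $\ge 6$ (and, as all schemes of length $\le 5$ are smoothable, the two ranks agree in this range). Suppose $Z$ were apolar of length $\le 5$. Then $(I_Z)_2\subseteq f^{\perp}_2$, and because $\dim f^{\perp}_2=15-\operatorname{rank}\mathrm{Cat}_{2,1}(f)=10$ while a length-$\le 5$ subscheme of $\PP^{4}$ imposes at most five conditions on conics, we must have $(I_Z)_2=f^{\perp}_2$ with $Z$ of length exactly $5$ imposing independent conditions, so $Z\subseteq\Gamma:=V(f^{\perp}_2)$. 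I would then compute the ten apolar conics explicitly, determine $\Gamma$, and show it contains no length-$5$ apolar subscheme — concretely, that apolarity forces any such scheme to contain all three full jets $Z_i$, hence to have length $\ge 6$. \textbf{This step, proving $\operatorname{cr}_X([f])\ge 6$, is the main obstacle of the construction.}

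For $r(f)=9$, each $\ell_i^{2}y_i$ has Waring rank $3$, and since the three summands involve the disjoint variables $y_0,y_1,y_2$ the decompositions assemble to $r(f)\le 9$. The lower bound is the delicate point: via apolarity one must show $f^{\perp}$ contains the ideal of no set of $\le 8$ reduced points, which I would reduce, using the collinearity of the $\ell_i$, to controlling the interaction of the three tangential constituents.

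Finally, for the tameness when $\dim V\le 4$ I first cut down the range of border ranks. By Proposition~\ref{prop_low_border_rank_is_tame} every cubic of border rank $\le 4$ is tame, so only border rank $\ge 5$ remains. If $\dim V\le 3$ then $\dim S^3V\le 10$ and already $\sigma_4(v_3(\PP V))$ fills $\PP(S^3V)$, so all cubics are tame. If $\dim V=4$ then $\sigma_5(v_3(\PP^{3}))=\PP^{19}$ (no defect, by Alexander--Hirschowitz), so every cubic has border rank $\le 5$ and the sole remaining case is border rank exactly $5$. For such $f$ it again suffices, since length-$\le 5$ schemes are smoothable, to prove $\operatorname{cr}_X([f])\le 5$, i.e. that $f$ admits an apolar scheme of length $\le 5$; combined with $\borderrk_X([f])=5$ this yields $sr_X([f])=5$. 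Establishing that every cubic in four variables has cactus rank $\le 5$ — equivalently that border and cactus rank coincide throughout $\PP^{3}$, which is exactly what fails in $\PP^{4}$ — is the crux of the tame statement, and I would prove it by analysing the Artinian Gorenstein apolar algebra $\operatorname{Sym}V^{*}/f^{\perp}$ of socle degree $3$ and producing a length-$\le 5$ apolar scheme case by case according to its Hilbert function.
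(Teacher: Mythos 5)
Your treatment of the easy halves is correct: the catalecticant bound gives $\borderrk_X([f])\ge 5$; your explicit limit based on $\ell_1^3-\ell_0^3-\ell_2^3=3\ell_0\ell_1\ell_2$ together with the rank-$2$ decomposition of the binary cubic $x_0x_1(x_0+x_1)$ is a valid (and pleasant) variant of the paper's explicit family $f_t$; and the three curvilinear jets give $sr_X([f])\le 6$ and $r(f)\le 9$ exactly as in the paper. But everything that makes the theorem a theorem is left as a declared plan: you yourself flag $cr_X([f])\ge 6$, $r(f)\ge 9$, and the $\dim V=4$ tameness as ``the main obstacle'', ``the delicate point'' and ``the crux'', and for none of them do you supply an argument. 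These are genuine gaps, not routine verifications. For $cr_X([f])\ge 6$ the paper finishes as follows: after your (correct) reduction to $\ccI(R)_2=\Ann(f)_2$ and $R\subset Z(\Ann(f)_2)$, one computes that the saturation $\ccJ$ of the ideal generated by $\Ann(f)_2$ contains $\beta_0,\beta_1,\beta_2$ (for instance $\alpha_0^3\beta_0$ is an explicit combination of the generators $\phi_i$), so the scheme cut out by $\Ann(f)_2$ lies in the line $\{\beta_0=\beta_1=\beta_2=0\}$ and $H(\Sym V^*/\ccJ)(1)=2<5=H(\Sym V^*/\Ann(f))(1)$, a contradiction. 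Note this also shows that your proposed completion points the wrong way: since $Z(\Ann(f)_2)$ is contained in that line, it contains none of the jets $Z_i$ (whose tangent directions are the $y_i$), so no argument forcing a length-$5$ apolar scheme ``to contain all three full jets'' can succeed; the actual contradiction is degeneracy (failure to span $\PP^4$), not excess length.

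The other two gaps are just as substantial. For $r(f)\ge 9$ the paper needs a dimension count (a reduced apolar $R$ of length $\le 8$ gives $\ccI(R)_2$ of codimension $\le 3$ in the $10$-dimensional $\Ann(f)_2$), a conic $C$ of parameters $a$ with $\beta_a\gamma_a\in\Ann(f)$, and a dedicated lemma on radical ideals (if $\beta_a\gamma_a\in I$ for all $a$ in an irreducible positive-dimensional family, then $\beta_a\gamma_{a'}\in I$ for all pairs $a,a'$), which together force a square into $\ccI(R)$, contradicting radicality; nothing in your sketch anticipates this mechanism. For $\dim V=4$, your reduction to ``every $4$-concise cubic of border rank $5$ has cactus rank $\le 5$'' is legitimate (Casnati--Notari smoothability makes $cr=sr$ in $\PP^3$), but that statement is precisely the hard content: the paper proves it by taking the flat limit $R_0$ of a border-rank-$5$ decomposition, showing via a Macaulay/Gotzmann lemma that a degenerate limit must lie in a line, deducing from the limiting non-saturated ideal that $\Ann(f)$ has a minimal generator in degree $3$, and then invoking the classification $f=x^3+g$ or $f=xy_1^2+g$ and treating both cases; your proposed Hilbert-function case analysis is not carried out, and there is no evidence it closes. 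Finally, the theorem asserts wild cubics exist for all $\dim V\ge 5$, while you only discuss $\dim V=5$; this last point, at least, is a one-line fix via conciseness, equation \eqref{equ_calculate_ranks_for_concise}.
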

The difficulty of Theorem~\ref{thm_wild_and_tame_cubics} lies in the cases $\dim V =4$ and $\dim V =5$.
For $\dim V \le 3$ it follows easily from Proposition~\ref{prop_low_border_rank_is_tame},
  while for $\dim V \ge 6$ the argument is identical to the argument for $\dim V=5$,
  see Section~\ref{sect_higher_dimension_examples}.

Note that the rank of a general cubic in 5 variables is 8.
Thus Theorem~\ref{thm_wild_and_tame_cubics} provides an example of a form, whose rank is higher then the generic rank.
Few such examples are known, except in two or three variables, see \cite{teitler_maximum_rank_of_monomials}.
  
Prior to posting this article, 
  we communicated the content and the proof of Theorem~\ref{thm_wild_and_tame_cubics} for $\dim V =5$ to 
  A.~Bernardi and B.~Mourrain, 
  who applied this result to their work \cite[Ex.~2.8]{bernardi_brachat_mourrain_comparison}.

\subsection*{Acknowledgements}
We thank Alessandra Bernardi, Enrico Carlini, Anthony Iarrobino, Bernard Mourrain and Kristian Ranestad for useful conversations,
and for their questions that motivated us to write this article.
The article is written as a part of "Computational complexity, generalised Waring type problems and tensor decompositions",
a project within "Canaletto", the executive program for scientific and technological cooperation between Italy and Poland, 2013-2015.

\section{General variety}\label{sect_general_variety}
We commence with a brief summary of the case in which $X \subset \PP^N$ is an arbitrary variety (or even a reduced scheme).
Admittedly, the content of this section is not very original but it serves to explain, using small and easy examples, how the anomalies arise.
In the case of polynomials, the same methods are used to produce ``wild'' examples,
  but since the Veronese variety is, in some sense, more ``regular'',
  those wild examples arise much later, and they are more complicated.

The $X$-\emph{cactus rank} of $p \in \PP^N$ is denoted by $cr_X(p)$ and it is the minimal number $r$ 
  such that there exists a finite scheme $R \subset X$ of length $r$ with $p \in \langle R \rangle$.
Here, we consider \textbf{all} finite schemes $R$,
  whereas in the definition of $sr_X$ we only consider those that are \textbf{smoothable} in $X$.

\subsection{Comparing ranks}

For all points,  the following inequalities of functions hold:
\begin{equation}\label{equ_compare_ranks_for_general_X}
\begin{aligned}
   \borderrk_X &\le sr_X \le r_X\\
   cr_X &\le sr_X \le r_X\\
\end{aligned}
\end{equation}
In some special situations we can say more, but in general, all inequalities can be strict, 
   and there is no simple inequality between $cr_X$ and $\borderrk_X$.
Bernardi and Ranestad \cite{bernardi_ranestad_cactus_rank_of_cubics} 
   show that the cactus rank of a general form with respect to the Veronese variety 
   can be strictly smaller than its border rank. 
In particular, this holds for cubics in at least $9$ variables.
On the other hand, Theorem~\ref{thm_wild_and_tame_cubics} produces examples of cubics with 
   $cr_X(F) =sr_X(F) > \borderrk_X(F)$.
Thus cubics in many variables can have either $cr_X$ or $\borderrk_X$ smaller.

Slightly more can be said, when $X$ is the Veronese variety, see Section~\ref{sect_comparing_ranks_for_Veronese}.
We note that one could also define the border versions of smoothable and cactus ranks.
We will not consider these concepts here, we only emphasise that the \emph{border smoothable rank} 
  is simply equal to the usual border rank.
Please see \cite{bernardi_brachat_mourrain_comparison} for more details about comparing various concepts of ranks,
  although the authors limit the discussion to the case when $X$ is the Veronese variety.

\subsection{Overview of tame behaviour for border rank and cactus rank at most two}\label{sect_ranks_two}
  
Every point $p \in \sigma_2(X)$ is one of the following types:
\begin{enumerate}
  \item $p \in X$, or
  \item $p \in \langle x_1, x_2 \rangle$ for two distinct points $x_i \in X$, or
  \item $p$ is in the tangent star to $X$ at a point $x \in X$. 
        See for instance \cite[Section~1.4]{jabu_ginensky_landsberg_Eisenbuds_conjecture} for a definition of the tangent star. 
\end{enumerate}
If $X$ is smooth, then the tangent star is equal to the tangent space (in particular, its dimension is $\dim X$).
Otherwise, dimension of the tangent star at a fixed point is at most $2\dim X$.
It  immediately follows that if $X$-border rank of $p$ is at most $2$, or $sr_X(p)\le 2$, then 
\begin{equation}\label{equ_all_ranks_equal}
  sr_X(p) = cr_X(p) = \borderrk_X(p) = 1 \text{ or } 2.
\end{equation}
See also \cite[Prop.~3.1]{jabu_ginensky_landsberg_Eisenbuds_conjecture}.
If, in addition, $X$ is smooth, or more generally,
  if the tangent star is equal to the projective Zariski tangent space at every point of $X$,
  then $cr_X(p) \le 2$ also implies \eqref{equ_all_ranks_equal}.
However, the first anomaly occurs here whenever the tangent star is not equal to the projective Zariski tangent space.
Specifically, it may happen, that $cr_X(p) =2$, whereas $\borderrk_X(p)$ and $sr_X(p)$ are arbitrarily large (for some choices of $X$ and $p$).
\begin{example}
   Consider $X$ to be a union of $N$ lines in $\PP^N$, all passing through a fixed point $o$.
   If the union of lines is not contained in any hyperplane, then any $p \in \PP^N$ has $X$-cactus rank $1$ or $2$,
       whereas if $N$ is large, the $X$-border rank of a general point in $\PP^N$ is also large. 
   The same holds, if $X\subset \PP^N$ is any curve with a singularity $o\in X$, such that the Zariski tangent spaces are equal:
      $T_o X = T_o \PP^N$. 
\end{example}
\begin{prf}
   We always have $cr_X(p) = 1$ if and only if $p \in X$.
   So suppose $p \notin X$. 
   We claim $cr_X(p) \le 2$, and to prove it we need to construct a scheme $R\subset X$ of length $2$
     such that $p \in \langle R \rangle$.
   Take a line $\ell \subset \PP^N$ containing $p$ and the singularity $o$.
   Inside this line take $R$ to be the unique scheme of length $2$ supported in $o$.
   Clearly, $\langle R \rangle = \ell \ni p$. 
   Moreover, $R \subset X$, since the Zariski tangent space to $X$ at $o$ is $T_o \PP^N$.
   Thus $R$ computes the cactus rank of $p$.

   On the other hand, $\dim \sigma_r(X) \le 2r-1$ (in fact, we have an equality,
     if $X$ is an irreducible curve, but a strict inequality, if $X$ is the union of lines as above and $r >1$).
   Thus the $X$-border rank of a general point $p\in \PP^N$ is at least $\lceil\frac{N+1}{2}\rceil$.
\end{prf}
Precisely this anomaly has been used in \cite[Sections~3.3--3.5]{jabu_ginensky_landsberg_Eisenbuds_conjecture} 
  to produce counter-examples to a conjecture of Eisenbud-Koh-Stillman.

\subsection{Border rank three}\label{sect_ranks_three}
For a smooth $x\in X$ denote by $\PP \hat T_x X \subset \PP^N$ the embedded projective tangent space.
The following proposition is also common knowledge but it may serve to provide ``wild'' examples with border rank $3$. 
\begin{prop}\label{prop_three_colinear_points}
   Suppose there exist three collinear points $x, y,z \in X$ and each of them is a smooth point of $X$.
   Then the linear span of the three tangent spaces is contained in $\sigma_3(X)$:
   \[
     \langle \PP \hat T_x X, \PP \hat T_y X, \PP \hat T_z X\rangle \subset \sigma_3(X).
   \]
\end{prop}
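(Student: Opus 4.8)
The plan is to work in the affine cone. Writing $\hat X \subset V$ for the cone over $X$ and $\hat T_x X \subset V$ for the affine tangent space at a smooth point (so that $\PP \hat T_x X$ is the embedded projective tangent space), the assertion is equivalent to the statement that every line in the subspace $\hat T_x X + \hat T_y X + \hat T_z X \subset V$ determines a point of $\sigma_3(X)$. Since $x \in \hat T_x X$ and $y \in \hat T_y X$, this subspace contains $\langle x, y \rangle$, so it suffices to show that for every vector $v \in \hat T_x X + \hat T_y X + \hat T_z X$ the whole plane $\langle x, y, v \rangle$ projectivises into $\sigma_3(X)$; letting $v$ range over the subspace then sweeps out the entire span of the three tangent spaces.

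First I would exploit collinearity. As $x, y, z$ are distinct and collinear, after rescaling we may write $z = \alpha x + \beta y$ with $\alpha, \beta$ nonzero scalars. Because the three tangent spaces are linear, every $v$ as above can be written as $v = u_z - \alpha u_x - \beta u_y$ for suitable $u_x \in \hat T_x X$, $u_y \in \hat T_y X$, $u_z \in \hat T_z X$. Since the points are smooth, I may choose curves $x(s), y(s), z(s)$ inside $\hat X$ with $x(0)=x$, $y(0)=y$, $z(0)=z$ and prescribed derivatives $x'(0)=u_x$, $y'(0)=u_y$, $z'(0)=u_z$. The collinearity relation forces the combination $z(s) - \alpha x(s) - \beta y(s)$ to vanish to zeroth order, so dividing it by $s$ is legitimate, and $\tfrac{1}{s}\bigl(z(s)-\alpha x(s)-\beta y(s)\bigr) \to u_z - \alpha u_x - \beta u_y = v$ as $s \to 0$. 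This is the crux: collinearity promotes the first-order (tangent) data into an honest limit while the underlying points stay on $X$.

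It then remains to assemble the limit. For fixed scalars $a, b, c$ consider $q(s) = a\,x(s) + b\,y(s) + \tfrac{c}{s}\bigl(z(s) - \alpha x(s) - \beta y(s)\bigr)$. Regrouping shows $q(s)$ is a linear combination of the three points $x(s), y(s), z(s) \in X$ (with coefficients blowing up, which is harmless), so $[q(s)] \in \sigma_3(X)$ for $s \ne 0$; and by the previous step $q(s) \to a x + b y + c v$. As $\sigma_3(X)$ is closed, the limit $[a x + b y + c v]$ lies in $\sigma_3(X)$. Varying $a, b, c$ covers the plane $\langle x, y, v\rangle$, and varying $u_x, u_y, u_z$ (hence $v$) over the three tangent spaces covers $\langle \PP \hat T_x X, \PP \hat T_y X, \PP \hat T_z X\rangle$, giving the inclusion. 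The only points demanding care are the ingredients used implicitly: that smoothness lets the curve derivatives fill out the full affine tangent spaces, that the map $(u_x, u_y, u_z) \mapsto u_z - \alpha u_x - \beta u_y$ is surjective onto the sum (which uses $\alpha, \beta \ne 0$, hence the distinctness of the three points), and the standard fact that a limit of points of $\sigma_3(X)$ remains in $\sigma_3(X)$.
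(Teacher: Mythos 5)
Your proof is correct and takes essentially the same route as the paper: choose curves in the affine cone with prescribed derivatives at the three points, use the collinearity relation to make the constant term vanish, and divide by the parameter so that the limit extracts exactly the tangent vector $v$, which lies in $\sigma_3(X)$ by closedness. The only differences are cosmetic --- the paper normalizes the lifts so that $\hat{x}_0+\hat{y}_0+\hat{z}_0=0$ and takes the single limit $\left[\tfrac{1}{t}\bigl(\hat{x}(t)+\hat{y}(t)+\hat{z}(t)\bigr)\right]$, while your extra step sweeping out the planes $\langle x,y,v\rangle$ with coefficients $a,b,c$ is redundant, since every point of the span is already of the form $[v]$ for some $v$ in the sum of the affine tangent spaces.
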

\begin{prf}
  Pick $\hat x_0, \hat y_0, \hat z_0 \in k^{N+1}$, such that $[\hat x_0] = x$, $[\hat y_0] = y$, and $[\hat z_0] = z$.
  Since $x,y,z$ are collinear,  we may suppose $\hat{x_0} + \hat{y_0} + \hat{z_0} = 0$.
 
  Pick any point $[v]\in \langle \PP \hat T_x X, \PP \hat T_y X, \PP \hat T_z X \rangle$,
    and decompose $v = v_x + v_y + v_z \in k^{N+1}$ so that $[v_x] \in  \PP \hat T_x X \subset \PP^N$, and analogously for $v_y$ and $v_z$.
  Then there exist curves $\hat{x}(t), \hat{y}(t), \hat{z}(t)$ in the affine cone $\hat X$ over $X$,
    such that $\hat{x}(0) = \hat{x}_0$, $\frac{\ud \hat{x}}{\ud t} (0)  = v_x$, and analogously for $\hat{y}$ and $\hat{z}$.
  Take $p(t) = \frac{1}{t} (\hat{x}(t) + \hat{y}(t) + \hat{z}(t))$, and clearly $[p(t)] \in \sigma_3(X)$.
  In particular,
  \begin{align*}
    \sigma_3(X) \ni [p(0)] &= \left[\lim_{t \to 0} \frac{1}{t} (\hat{x}(t) + \hat{y}(t) + \hat{z}(t))\right]\\
                           &= \left[\frac{\ud (\hat{x} + \hat{y} + \hat{z})}{\ud t} (0)\right] = [v].
  \end{align*}
\end{prf}
In the situation of the proposition, if $p \in \langle \PP \hat T_x X, \PP \hat T_y X, \PP \hat T_z X\rangle$,
  then $sr_X(p) \le 6$, and this leaves a possibility for $sr_X(p)$ to be more than $\borderrk_X(p) =3$.
In fact, this happens for some $X$. For instance, if $X = \PP A \times \PP B \times \PP C \subset \PP(A\otimes B \otimes C)$ 
  in the Segre embedding, then the points of border rank $3$ come in four types, see \cite[Thm~1.2]{landsberg_jabu_third_secant}.
The last type, that is 
\[
 p=a_2 \otimes b_1 \otimes c_2 + a_2 \otimes b_2 \otimes c_1 + a_1 \otimes b_1 \otimes c_3 + a_1\otimes b_3 \otimes c_1  + 
   a_3\otimes b_1 \otimes c_1,
\]
has $X$-cactus rank and $X$-smoothable rank equal to $4$, whenever the vectors appearing in the expression are linearly independent. To see that, suppose for contradiction the $X$-cactus rank is less than $4$. 
Then there exists a scheme $R \subset X$ of length at most $3$, with $p \in \langle R \rangle$.
By the considerations in Section~\ref{sect_ranks_two}, length of $R$ is $3$.
Thus $R$ is either a union of three distinct reduced points, or a reduced point and a double point $\Spec k[t]/(t^2)$ contained in some line,
  or a triple point $\Spec k[t]/(t^3)$ contained in some curve. ($R$ cannot be a double point $\Spec k[t,u]/(t^2, tu, u^2)$, since then $cr_X(p)\le 2$, compare with \cite[Lem.~2.3]{nisiabu_jabu_cactus}.)
Each of these three cases, corresponds to the cases (i)--(iii) of \cite[Thm 1.2]{landsberg_jabu_third_secant}.
But $p$ is not in any of these cases by the last sentence of \cite[Thm 1.2]{landsberg_jabu_third_secant}.
So $sr_X(p) \ge cr_X(p)\ge 4$, and they are at most $4$ by \cite[Thm 1.2(iv)]{landsberg_jabu_third_secant}.

More generally, one may construct curves in large projective spaces, such that 
   $sr_X(p) \ge cr_X(p)\ge 4$, while $\borderrk_X(p) =3$ for some point $p$. 
\begin{question}
   Is there a universal bound on $sr_X(p)$ for points  $p \in \sigma_3(X)$?
   That is, does there exist an integer $r$, such that for all $N$, all $X\subset \PP^N$, and all $p \in \sigma_3(X)$,
      we have $sr_X(p) \le r$? 
\end{question}
Of course, as stated above, the points obtained using Proposition~\ref{prop_three_colinear_points} have $sr_X(p) \le 6$.
But there might be other ways to construct wild points. 
In fact, we expect a negative answer to this question,
  but one may limit oneself for example to only smooth $X$, or even only to $X$ which is a homogeneous space, to obtain a sensible bound.

\subsection{Higher border rank}

More generally, a statement analogous to Proposition~\ref{prop_three_colinear_points} for arbitrary number of points, and for singular points is true, and the proof is conceptually identical, only the notation becomes more complicated:
\begin{prop}\label{prop_r_linearly_dependent points}
   Suppose there exist points $\fromto{x_1}{x_r} \in X$, that are linearly degenerate, that is $\dim \langle \fromto{x_1}{x_r} \rangle < r-1$. 
   Then the join of the $r$ tangent cones at these points is contained in $\sigma_r(X)$.
   In the case $X$ is smooth at $\fromto{x_1}{x_r}$:
   \[
     \langle \fromto{\PP \hat T_{x_1} X}{\PP \hat T_{x_r} X} \rangle \subset \sigma_r(X).
   \]
\end{prop}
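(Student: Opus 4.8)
The plan is to generalize the proof of Proposition~\ref{prop_three_colinear_points} directly, replacing the three points with $r$ points and using the linear degeneracy hypothesis in place of collinearity. First I would choose affine representatives $\hat{x}_{1,0}, \dots, \hat{x}_{r,0} \in k^{N+1}$ of the points $x_1, \dots, x_r$. The key observation is that the hypothesis $\dim \langle x_1, \dots, x_r \rangle < r-1$ means that these $r$ vectors are linearly dependent: there exist scalars $\lambda_1, \dots, \lambda_r$, not all zero, with $\sum_{i=1}^r \lambda_i \hat{x}_{i,0} = 0$. (In the collinear case $r=3$ one arranges $\lambda_1=\lambda_2=\lambda_3=1$; in general the $\lambda_i$ need not all be nonzero, and one should note that at least two of them are nonzero, since no single $x_i$ is the zero vector.)

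Next I would take a point $[v]$ in the join of the tangent cones and decompose $v = v_1 + \dots + v_r$ with each $[v_i]$ lying in the tangent cone $\PP \hat T_{x_i} X$. As in the proof of Proposition~\ref{prop_three_colinear_points}, for each $i$ I would produce a curve $\hat{x}_i(t)$ in the affine cone $\hat X$ with $\hat{x}_i(0) = \hat{x}_{i,0}$ and derivative $\frac{\ud \hat{x}_i}{\ud t}(0) = v_i$. Here I would be slightly careful at singular points: the relevant object is the tangent cone rather than the Zariski tangent space, so one uses that every vector in the tangent cone is realized as the velocity of some analytic (or formal) arc on $\hat X$, which is exactly the defining property of the tangent cone.

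Then I would set
\[
  p(t) = \frac{1}{t}\Bigl(\sum_{i=1}^r \lambda_i\, \hat{x}_i(t)\Bigr),
\]
and observe $[p(t)] \in \sigma_r(X)$ for $t \neq 0$, since $p(t)$ is a linear combination of the $r$ points $[\hat{x}_i(t)] \in X$. Because $\sum_i \lambda_i \hat{x}_i(0) = 0$, the numerator vanishes at $t=0$, so the limit
\[
  \lim_{t\to 0} p(t) = \frac{\ud}{\ud t}\Bigl(\sum_{i=1}^r \lambda_i \hat{x}_i\Bigr)(0) = \sum_{i=1}^r \lambda_i v_i
\]
exists. Since $\sigma_r(X)$ is closed, $[\sum_i \lambda_i v_i] \in \sigma_r(X)$.

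The main obstacle I anticipate is that the limit computed this way is $[\sum_i \lambda_i v_i]$, whereas I want to reach an arbitrary $[v] = [\sum_i v_i]$ in the join. The remedy is that the tangent cone is invariant under rescaling: if $v_i$ lies in $\PP \hat T_{x_i} X$ then so does $\lambda_i^{-1} v_i$ (for $\lambda_i \neq 0$), so one simply replaces the chosen decomposition $v_i$ by $\lambda_i^{-1} v_i$ in the arc construction, making the limit equal $[v]$. Care is needed for those indices with $\lambda_i = 0$: for such $i$ one takes the constant arc $\hat{x}_i(t) \equiv \hat{x}_{i,0}$, contributing nothing to the derivative, and handles the corresponding tangent directions by a small perturbation argument or by absorbing them into the span spanned by the remaining points — this bookkeeping is the only genuinely delicate point, but it is routine and adds no new ideas beyond those in Proposition~\ref{prop_three_colinear_points}.
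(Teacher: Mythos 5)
Your construction is correct, and matches the argument the paper intends (it omits the proof as ``conceptually identical'' to Proposition~\ref{prop_three_colinear_points}), precisely when there exists a linear relation $\sum_{i=1}^r \lambda_i \hat x_{i,0} = 0$ with \emph{all} $\lambda_i \neq 0$: rescaling and taking $p(t) = \frac{1}{t}\sum_i \lambda_i \hat x_i(t)$ then works verbatim, and this full-support case is all the paper actually uses (its five points lie on a twisted cubic, where any four are linearly independent, so the unique relation involves all five points nontrivially). The genuine gap is the situation you relegate to ``routine bookkeeping''. The hypothesis $\dim \langle x_1, \dots, x_r \rangle < r-1$ does \emph{not} supply a full-support relation: for $r=4$ take $x_1, x_2, x_3$ collinear and $x_4$ off that line; the span is a plane, so the hypothesis holds, yet every relation has $\lambda_4 = 0$ (since $\hat x_{4,0}$ is not in the span of the others). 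For such an index your construction contributes nothing at all: the term $\lambda_4 \hat x_4(t)$ is identically zero whatever arc you choose, so the limit you compute is $\sum_{\lambda_i \neq 0} v_i$, and no choice of arcs inserts the direction $v_4$. What your argument really proves is $\langle \PP \hat T_{x_1} X, \PP \hat T_{x_2} X, \PP \hat T_{x_3} X, x_4 \rangle \subset \sigma_4(X)$, which is strictly weaker than the claimed $\langle \PP \hat T_{x_1} X, \dots, \PP \hat T_{x_4} X \rangle \subset \sigma_4(X)$.

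Neither of your suggested repairs closes this. ``Perturbation'' is unavailable because the points are fixed by hypothesis and moving them destroys the relation. ``Absorption'' would mean writing $w + v_4 = \lim_{t \to 0} \bigl( (w - \tfrac{1}{t} \hat x_{4,0}) + \tfrac{1}{t} \hat x_4(t) \bigr)$ and arguing that $w - \tfrac{1}{t} \hat x_{4,0}$ stays inside the linear space $\hat T_{x_1} X + \hat T_{x_2} X + \hat T_{x_3} X \subset \hat\sigma_3(X)$; that works only if $\hat x_{4,0}$ lies in this space, which the hypothesis does not guarantee (for a curve with generic tangent lines it fails). Without it, the correction term $-\tfrac{1}{t}\hat x_{4,0}$ costs an extra point of $X$, and the limit only lands in $\sigma_5(X)$ (in general in $\sigma_{2r-s}(X)$, where $s$ is the size of the support of the relation). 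So the zero-coefficient indices require either a strengthened hypothesis --- e.g.\ existence of a full-support relation, or $\hat x_{i,0} \in \sum_{j \in S} \hat T_{x_j} X$ for every $i \notin S$, with $S$ the support --- or a genuinely new idea; it is not clear that the statement as literally written survives without one. A secondary, much smaller issue: at a singular point, a tangent-cone direction is only the \emph{initial} direction of an arc, possibly with higher-order contact, so one must reparametrize the arcs and rescale by $t^{-m}$ for a common $m$; this is fixable, unlike the issue above.
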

\noprf

\section{Tame cases for polynomials}
We begin this section by providing some standard references and facts about polynomial decompositions. 
That is we consider the Vero\-ne\-se variety $X=v_d(\PP V) \subset \PP (S^d V$),
   as defined in the introduction, and from now on we simply say \emph{rank}, \emph{border rank}, etc,
   to mean $X$-rank, $X$-border rank etc.
By a standard abuse of notation, we will apply all sorts of rank both to points in $S^d V$ (polynomials) and points in
   $\PP (S^d V)$, as convenient.

\subsection{Conciseness and ranks}   
\begin{defin}
  For a linear subspace $W\subset V$, we say that a polynomial $f \in S^d W$ is \emph{$n$-concise},
  or in other words, $f$ \emph{depends essentially on $n$ variables},
  if $\dim W = n$ is minimal, that is $f \notin S^d U$ for any linear subspace $U \subsetneqq W$.
  We denote this integer $n$ by $concise(f)$. 
\end{defin}

In the situation of the definition, if  $f \in S^d W$ with $W \subset V$ all ranks may be calculated in $W$,
that is:
\begin{equation}\label{equ_calculate_ranks_for_concise}
  cr_{v_d(\PP V)}(f) = cr_{v_d(\PP W)}(f) \text{ and analogously for } \borderrk_X, sr_X, r_X.
\end{equation}

For $r_X$ this is proven in \cite[Rem.~2.3]{carlini_catalisano_geramita_Waring_rank_of_monomials};
  for $\borderrk_X$ an analogous statement is shown for general tensors in \cite[Prop.~2.1 \& Cor.~2.2]{landsberg_jabu_ranks_of_tensors} 
  (the proof for the symmetric case is also analogous).
  For the cases of $cr_X$ and $sr_X$,
  we slightly modify the proof for $r_X$. 
Specifically, if $R \subset \PP V$ is such that $[f] \in \langle v_d(R)  \rangle$,
  then we can take a general linear projection $V \to W$, and let $Q$ be the image of $R$ under this projection.
We claim that $[f] \in \langle v_d(Q) \rangle$. 
Indeed, we have the induced linear projection 
  $\PP (S^d V) \dashrightarrow \PP (S^d W)$, which restricted to the Veronese varieties,
  is just the projection $\PP V \dashrightarrow \PP W$ 
  (in particular, the image of $\langle v_d(R)  \rangle$ is $\langle v_d(Q)  \rangle$),
  and it is the identity on $\PP (S^d W)$ (in particular, the image of $[f]$ is $[f]$).
This proves the statement about $cr_X$, since $\length Q \le \length R$. 
 
If in addition $R$ was smoothable (to calculate $sr_X$), then we can also project the smoothing.
Now the limit of the projected smoothing is $Q'$, and its length is equal to the length of $R$.
Moreover $Q \subset Q'$, so $[f] \in \langle v_d(Q)  \rangle \subset \langle v_d(Q')  \rangle$.
The details of this argument resemble the argument in the proof of 
   \cite[Lem.~2.8]{jabu_ginensky_landsberg_Eisenbuds_conjecture}.

To see that
\begin{equation}\label{equ_concise_is_lower_bound}
   cr_X(f), sr(f), \borderrk_X(f), r_X(f) \ge concise(f)
\end{equation}
we pick a finite scheme $R\subset \PP^{n-1}$ (perhaps smoothable or smooth) such that:
\[
   [f] \in \langle v_d(R)  \rangle \subset \langle v_d (\PP^{n-1}) \rangle =\PP S^d k^n. 
\]
If $f$ is $n$-concise, then $R$ must span $\PP^{n-1}$, which is only possible when $R$ has length at least $n$.
Also being $m$-concise for some $m\le n$ is a closed condition, so the same inequality applies also to $\borderrk_X(f)$.

\subsection{Quadrics}

If $d=2$, then by standard linear algebra for all $f \in S^2 V$ we have $r(f) = concise(f)$. 
So using \eqref{equ_concise_is_lower_bound}, and $cr, \borderrk, sr \le r$,
we conclude $cr(f) = \borderrk(f) = sr(f) = r(f) = concise(f)$, which is just the standard notion of the rank of a quadratic form.

\subsection{Annihilator}\label{sect_annihilator}\label{sect_comparing_ranks_for_Veronese}

We consider the polynomial ring $\Sym V^*$, 
  the coordinate ring of the projective space $\PP(V)$,
  which we identify with the algebra of polynomial differential operators with constant coefficients acting on $\Sym V$.
For $\alpha \in S^i V^*$, $f \in S^d V$, we denote $\alpha \hook f$ the result of the differentiation. 
We let $\Ann(f) \subset \Sym V^*$ be the \emph{annihilator} of $f \in S^d V$ (also called the \emph{apolar ideal} of $f$):
\[
    \Ann(f)  := \set{ \alpha \in S^{\bullet} V^* \mid  \alpha \hook p = 0 }.
\]
Such an ideal (which is a Gorenstein ideal) has been considered by many authors since the time of Macaulay, 
   but recently it has got a lot of attention in relation to the secant varieties and symmetric tensor rank.
See \cite{iarrobino_kanev_book_Gorenstein_algebras}, and \cite[Section~21.2]{eisenbud} for exhaustive reports on this subject,
   and \cite[Section~4]{nisiabu_jabu_cactus} for a brief and comprehensive overview of the fundamental properties.
The main observations that make the annihilator useful to our study are as follows:
\begin{enumerate}
  \item For $f \in S^d V$ and  $R \subset \PP V$, we have the following equivalence:
         $f \in \langle v_d (R) \rangle$, if and only if  the homogeneous ideal of $R$ is contained in $\Ann(f)$.
  \item Let $H_f$ be the Hilbert function of $\Sym V^* / \Ann(f)$.
        Then for any integer $i$ all ranks are at least $H_f(i)$:
        \[
           r(f), \borderrk(f), sr(f), cr(f)\ge H_f(i).
        \]
\end{enumerate}

We add (ii) above to \eqref{equ_compare_ranks_for_general_X}, 
  \eqref{equ_concise_is_lower_bound} to show how various notions of ranks (for homogeneous forms) depend on each other:
\begin{align*}
   H_f(i) \le \borderrk(f) &\le sr(f) \le r(f) \text{ for all $i\in\setfromto{1}{d-1}$},\\
   H_f(i) \le cr(f) &\le sr(f) \le r(f) \text{ for all $i\in\setfromto{1}{d-1}$},\\
   \text{and } H_f(1) &= concise(f).
\end{align*}

\subsection{Two variables and the tameness principle}\label{sect_tameness_principle}

In arbitrary degree, but in two variables ($f$ is $2$-concise), 
  a modern way to phrase the 19th century result of Sylvester is 
  \[
    \borderrk(f) = sr(f) = cr(f).
  \]
The reference is \cite[Thm.~1.44]{iarrobino_kanev_book_Gorenstein_algebras}, 
  and we explain how the result follows from that theorem.
Specifically, $\Ann(f)$ is a complete intersection ideal \cite[Thm.~1.44(iv)]{iarrobino_kanev_book_Gorenstein_algebras}, 
  minimally generated by two functions, say of degrees $d_1, d_2$, with $d_1 \le  d_2$.
Moreover $H_f(i) \le d_1$, and $H_f(d_1-1) = d_1$ \cite[Thm.~1.44(i)]{iarrobino_kanev_book_Gorenstein_algebras}.
Thus all ranks of $f$ are at least $d_1$.
The generator of degree $d_1$ defines a scheme $R$ of length $d_1$,
  whose ideal is contained in the annihilator, which implies that 
  the cactus rank of $f$ is also at most $d_1$, so $cr(f) =d_1$.
Since $R \subset \PP^1$, it is smoothable, thus $sr(f) =d_1$.
Also $\borderrk(f) \le d_1$ by  \cite[Thm.~1.44(ii)]{iarrobino_kanev_book_Gorenstein_algebras}.

The considerations above can be partially generalised to any number of variables in the following form:
\begin{principle}\label{prin_tameness}
   Let $f\in S^d V$ and $X = v_d(\PP V)$. 
   If $\borderrk(f) \le  d+1$, then $sr(f) = \borderrk(f)$.
\end{principle}
For a proof see \cite[Prop.~2.5]{nisiabu_jabu_cactus} or \cite[Prop.~11]{bernardi_gimigliano_ida}.

In particular, Proposition~\ref{prop_low_border_rank_is_tame} follows: 
Applying the considerations above to quadrics we may assume $d\ge 3$, and then Proposition~\ref{prop_low_border_rank_is_tame} follows from Principle~\ref{prin_tameness}.

We will use the following lemma:
\begin{lemma}\label{lem_contained_in_a_line}
   Suppose $\ccI \subset \Sym V^*$ is a homogeneous ideal (not necessarily saturated)
      with $\dim S^d V^* / \ccI_d  \le d+1$,
      and $R \subset \PP V$ is the subscheme defined by $\ccI$.
   Then either $R$ is a finite (or empty) scheme  of length at most $d+1$,
      or there exists a linearly embedded $\PP^1 \subset \PP V$
      such that  $R \subset \PP^1$.
\end{lemma}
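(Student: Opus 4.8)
The plan is to pass to the saturation and to the linear span of $R$, and then to bound the Hilbert function from below. Let $\bar\ccI \supseteq \ccI$ be the saturated ideal of $R$, and write $H_R(i) := \dim (\Sym V^*/\bar\ccI)_i$. Since $\bar\ccI \supseteq \ccI$, we have $H_R(i) \le \dim (\Sym V^*/\ccI)_i$ for all $i$, and in particular $H_R(d) \le d+1$. Let $\Lambda = \langle R \rangle \subset \PP V$ be the linear span of $R$, a linearly embedded $\PP^m$ with $m = H_R(1) - 1$; because the $n-1-m$ independent linear forms vanishing on $R$ lie in $\bar\ccI$, the Hilbert function $H_R$ is unchanged when we regard $R$ as a nondegenerate subscheme of $\Lambda$. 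If $m \le 1$ then $R \subset \Lambda$ is contained in a line (or a point), giving the second alternative. So I would henceforth assume $m \ge 2$ and prove that $R$ must be finite of length at most $d+1$, which is the first alternative.

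The crux is the following lower bound, which I would prove by induction on $\dim R$: \emph{if $R \subset \PP^m$ is nondegenerate with $\dim R \ge 1$, then $H_R(d) \ge d + m$}. Since $\bar\ccI$ is saturated, the ring $A := \Sym V^*/\bar\ccI$ has depth at least $1$, so a general linear form $\ell$ avoids every associated prime (each associated prime, being distinct from the irrelevant ideal, contains only a proper subspace of the linear forms) and is therefore a nonzerodivisor. Hence $H_{A/\ell A}(i) = H_R(i) - H_R(i-1)$ is the first difference of $H_R$, with value $1$ in degree $0$ and value $m$ in degree $1$. In the base case $\dim R = 1$, the ring $B := A/\ell A$ has Krull dimension $1$; as $B$ is generated in degree $1$, vanishing of some $B_i$ would force $B_j = 0$ for all $j \ge i$ and hence $\dim B = 0$, so $H_B(i) \ge 1$ for every $i$, and summing the differences gives $H_R(d) = \sum_{i=0}^d H_B(i) \ge 1 + m + (d-1) = d+m$. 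For $\dim R \ge 2$, the general hyperplane section $R \cap \{\ell = 0\}$ is nondegenerate of dimension $\dim R - 1 \ge 1$ inside $\PP^{m-1}$, and its saturated coordinate ring is a quotient of $B$, so $H_B(i) \ge H_{R \cap \{\ell=0\}}(i)$; retaining only the degree $0$ and degree $d$ terms and invoking the inductive hypothesis yields $H_R(d) \ge H_B(0) + H_B(d) \ge 1 + \bigl(d + (m-1)\bigr) = d+m$.

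With this bound the case $m \ge 2$ splits in two. If $\dim R \ge 1$, then $H_R(d) \ge d + m \ge d + 2$, contradicting $H_R(d) \le d+1$; so $R$ must be finite. For finite $R$ the ring $A$ is one-dimensional and Cohen--Macaulay, a general linear form is again a nonzerodivisor, and the first difference of $H_R$ is the Hilbert function of an Artinian algebra generated in degree $1$; hence $H_R$ is non-decreasing and increases by at least $1$ at each step until it reaches its final value $\length R$, giving $H_R(i) \ge \min(i+1,\length R)$. Were $\length R \ge d+2$, this would force $H_R(d) = d+1$ with all increments equal to $1$, hence $H_R(1) = 2$ and $m = 1$, contradicting $m \ge 2$. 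Therefore $\length R \le d+1$.

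I expect the genuine obstacle to be the lower bound $H_R(d) \ge d+m$ for positive-dimensional $R$ that may be reducible or non-reduced: the delicate points are guaranteeing that a general linear form is a nonzerodivisor --- which is precisely why passing to the saturation $\bar\ccI$ is indispensable --- and controlling the embedded and reducible behaviour through the hyperplane-section induction. By contrast, the reduction to the linear span and the treatment of the finite case are routine.
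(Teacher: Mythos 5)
Your route is genuinely different from the paper's: the paper never saturates, but instead replaces $\ccI$ by the subideal generated by $\ccI_d$ and applies Macaulay's growth bound together with Gotzmann's persistence theorem to produce the two alternatives. Your plan (saturate, restrict to the linear span, bound the Hilbert function from below via an Artinian reduction) is viable, and both your reduction step and your treatment of the finite case are correct. However, the inductive step of your key claim rests on a false assertion: a general hyperplane section of a nondegenerate scheme need \emph{not} be nondegenerate in the hyperplane. For example, let $R$ be the union of two disjoint $2$-planes in $\PP^5$: it is nondegenerate of dimension $2$, yet a general hyperplane section is a union of two disjoint lines, which spans only a $\PP^3$ inside the hyperplane $\PP^4$. (The classical statement you are implicitly invoking requires irreducibility, which you cannot assume here.) In such a case the inductive hypothesis applies only inside the smaller span and yields $H_R(d) \ge 1 + (d+3) = d+4$, falling short of the needed $d+5$; so the induction as written breaks down exactly at the point you yourself flagged as delicate.

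Fortunately the gap is easy to close, because the induction is unnecessary: your base-case argument proves the claim for \emph{every} $\dim R \ge 1$. Indeed, if $\dim R \ge 1$ then $A$ has Krull dimension at least $2$, so $B = A/\ell A$ has Krull dimension at least $1$; since $B$ is standard graded, $B_i = 0$ would force $B_j = 0$ for all $j \ge i$ and make $B$ Artinian, so $H_B(i) \ge 1$ for all $i$. Summing first differences then gives $H_R(d) \ge H_B(0) + H_B(1) + (d-1) = 1 + m + (d-1) = d+m$, with no reference to hyperplane sections at all. With that substitution your proof is complete and correct, and it is arguably more elementary than the paper's (no Macaulay bound, no Gotzmann persistence), at the price of the saturation and nonzerodivisor bookkeeping that the paper's argument avoids.
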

\begin{prf}
   First observe that it is sufficient to assume that $\ccI$ is generated by degree $d$: 
     smaller degrees have no effect on $R$,
     whereas the subideal generated by $\ccI_d$ defines $R' \subset \PP V$ with $R \subset R'$,
     and it suffices to prove the claim for $R'$.
   
   Let $A = \Sym V^*/\ccI$.
   The Macaulay bound \cite[Thm~3.3]{green_generic_initial_ideals}, \cite[Thm~2.2(i), (iii)]{stanley_hilbert_functions_of_graded_algebras}
       on the growth of Hilbert function gives $\dim A_{d+1} \le d+2$.
   We consider two cases.
   If $\dim A_{d+1} \le d+1$, then further applications of the Macaulay bound give 
     $\dim A_{i} \le d+1$ for all $i\ge d$.
   So the Hilbert polynomial of $A$ is a constant $r \le d+1$, and $R$ is finite of length at most $d+1$.
   Otherwise $\dim A_{d+1} = d+2$, and the Gotzmann persistence theorem 
      \cite{gotzmann_persistence_theorem}, \cite[Thm~3.8]{green_generic_initial_ideals}
      provides $\dim A_{i} = i+1$ for $i\ge d$.
   Thus the Hilbert polynomial of $A$ is $i+1$, that is $\ccI$ defines a subscheme of dimension $1$ 
      (the degree of the Hilbert polynomial) and degree $1$ (its leading coefficient).
   That is $R$ is a union of a line and a finite subscheme. 
   But since the Hilbert polynomial of the line is already $i+1$,
     the constant coefficient in the Hilbert polynomial of $A$ determines that the finite subscheme is redundant
     (contained in the line). Thus $R$ is a line.
\end{prf}

\subsection{Cubics in three variables are tame}\label{sect_cubics_in_3_variables}

Let $f \in S^3V$,  where $\dim V=3$. 
It is a classical statement known for at least 100 years, that $\sigma_4(v_3(\PP^2)) = \PP^8$.
This can be calculated explicitly, and 
  it also follows from the Alexander-Hirschowitz theorem,
  see e.g.~\cite[Thms~1.1 \& 1.2]{brambilla_ottaviani_on_AH_theorem}.
Thus $\borderrk(f) \leq 4 =3+1$, and thus $sr(f) = \borderrk(f)$ by Principle~\ref{prin_tameness}.
Since all schemes in $\PP^2$ are smoothable, also $cr(f) = sr(f)$.

\subsection{Cubics in four variables are tame}
Let $f \in S^3V$,  where $\dim V=4$, and suppose $f$ is $4$-concise.
By \cite[Cor.~2.6]{casnati_notari_irreducibility_Gorenstein_degree_9}
   all finite Gorenstein schemes in $\PP^3$ are smoothable, hence  $cr(f)=sr(f)$ by \cite[Lem.~2.3]{nisiabu_jabu_cactus}.
In this subsection we prove that also in this case $\borderrk(f) = sr(f)$.  

We have $\sigma_5(v_3(\PP^3)) = \PP(S^3 V)$ 
  (again, either calculate explicitly or just crack the nut with the sledgehammer of Alexander-Hirschowitz theorem, 
   see, e.g., \cite[Thms~1.1 \& 1.2]{brambilla_ottaviani_on_AH_theorem}).
By Principle~\ref{prin_tameness}, if $\borderrk(f) \le 4$, then $sr(f) = \borderrk(f)$, so we assume $\borderrk(f)=5$.
Thus $f$ is a limit of $f_t$, with $f_t \in \langle v_3(R_t)\rangle$, and for $t\ne 0$ the scheme $R_t \subset \PP V$
  is a reduced union of $5$ distinct points.
We let $R_0$ be the scheme, which is the flat limit of $R_t$ (the limit in the Hilbert scheme).
If $\dim \langle v_3 (R_0) \rangle  = 4$ (the expected dimension), then 
  $\lim_{t \to 0} \langle v_3(R_t)\rangle = \langle v_3(R_0)\rangle$, and 
  $f \in \langle v_3(R_0)\rangle$, that is $sr(f) \le 5$, so $\borderrk(f) = sr(f)$. 
 
Thus assume  $\dim \langle v_3 (R_0) \rangle  \le 3$, which is only possible, 
  if $R_0$ is contained in a line $\PP^1 \subset \PP V$ by Lemma~\ref{lem_contained_in_a_line}.
Thus the saturated ideal $\ccI^{sat}$ of $R_0$ is generated by $2$ linear equations and $1$ quintic equation. 
Let $\ccI_t \subset \Sym V^*$ for general $t\ne 0$  be the saturated ideal of $R_t$,
  and let $\ccI$ be the flat limit of ideals $\ccI_t \to \ccI$, so that $\ccI$ defines $R_0$,
  but is not necessarily saturated (with our assumptions we can even observe $\ccI$ is never saturated).
Since $(\ccI_t)_3 \subset \Ann(f_t)_3$ for $t \ne 0$, 
  we must have the limiting statement $\ccI_3 \subset \Ann(f)_3$.
So by \cite[Prop.~3.4(iii)]{nisiabu_jabu_cactus} we have $\ccI \subset \Ann(f)$. Furthermore $\ccI \subset \ccI^{sat}$.

Since $f$ is concise, the Hilbert function
\begin{align*}
\text{of } A_f = \Sym(V^*)/\Ann(f)& \text{ is } (1,4,4,1,0,0\dotsc),\\
\text{of } \Sym(V^*)/ \ccI^{sat}  & \text{ is } (1, 2, 3, 4, 5, 5, 5, \dotsc),\\
\text{of } \Sym(V^*)/ \ccI        & \text{ is either} (1,4,5,5,\dotsc) \text{ or } (1,4,4,5,\dotsc).
\end{align*}
The last Hilbert function is because the ideal $\ccI$ arises as a flat limit of $\ccI_t$, 
  which are saturated ideals of $5$ distinct points,
  and the Hilbert function of $\Sym(V^*)/\ccI$ must be the same as the Hilbert function of $\Sym(V^*)/\ccI_t$,
  and is bounded from above by $5$.

We now look at the ideal $\ccI_{\le 3}$, generated by the second and third degrees of $\ccI$ ($\ccI$ has no linear generator).
Note that this ideal defines a subscheme of $\PP^3$ containing a projective line $\PP^1$, the same as $\langle R_0 \rangle$.
This is because you need at least a quintic to define $R_0$ inside the $\PP^1$.
We have $\ccI_{\le3} \subset \Ann(f)$ and $\Ann(f)$ needs at least two more minimal generators than those present in $\ccI_{\le 3}$.
Otherwise, $\Ann(f)$ would define a non-empty scheme, a contradiction.
Comparing the Hilbert functions, we see that at most one of these generators is a quadric.
Thus there is at least one minimal generator of $\Ann(f)$ of degree $\ge 3$.

For cubics, $\Ann(f)$ is generated in degrees at most $4$, and there is a generator of degree $4$ 
  if and only if $f$ is of rank $1$, \cite[Remark 4.3]{casnati_notari_irreducibility_Gorenstein_degree_10} or \cite[Proposition~6]{nisiabu_jabu_kleppe_teitler_direct_sums}.
So in our case, $\Ann(f)$ is generated in degrees at most $3$, 
  and thus $\Ann(f)$ has at least one minimal generator in degree $3$.
Polynomials for which the annihilator has a minimal generator in degree equal to the degree of the polynomial are studied 
  in details in \cite{nisiabu_jabu_kleppe_teitler_direct_sums}.
This special case has been studied earlier by Casnati and Notari, 
   \cite[Lem.~4.5]{casnati_notari_irreducibility_Gorenstein_degree_10} .
In particular, up to a linear choice of coordinates $(x,y_1, y_2, y_3)$, 
  $f$ is either $x^3 + g(y_1, y_2, y_3)$ or $xy_1^2 + g(y_1, y_2, y_3)$ 
  for some cubic $g \in S^3 \langle y_1, y_2, y_3\rangle$.
  In either case, $sr(g) \le 4$ by Section~\ref{sect_cubics_in_3_variables}.

If $f = x^3 + g(y_1, y_2, y_3)$, then $sr(f) \le sr(g) +1 \le 5$.
Since $\borderrk(f)=5$ and $\borderrk(f) \le sr(f)$, we have $sr(f) = 5$ and $\borderrk(f) = sr(f)$.

If  $f = xy_1^2 + g(y_1, y_2, y_3)$, then we can further change variables and replace $x$ with $\tilde{x}$ 
  and suppose $\tilde{x} = x + a_1 y_1 + a_2 y_2 + a_3 y_3$,
  so that $f = \tilde{x} y_1^2 + \tilde{g}(y_1, y_2, y_3)$, where $\tilde{g}$ has no terms $y_1^3$, $y_1^2y_2$, $y_1^2y_3$.
That is $\tilde{g}$ is singular at $(1:0:0)$.
Singular cubics in $3$ variables have border rank (and so also smoothable rank) at most $3$,
  see \cite[Table~1, p.353]{landsberg_teitler_ranks_and_border_ranks_of_symm_tensors}.
Since $sr(\tilde{x} y_1^2) =2$, we have $sr(f) \le 5$. Thus $sr(f) = 5$ and $br(f) = sr(f)$.

\section{Wild cases} 

Let $V:=k^5$ with basis $\set{x_0,x_1,y_0,y_1,y_2}$
  and let 
  \[
    \Sym V^* =k[\alpha_0,\alpha_1,\beta_0,\beta_1,\beta_2]
  \]
be the dual ring.
We will consider a specific $f \in S^3 V$, whose border rank is $5$,
cactus rank and smoothable rank are $6$, and rank is $9$.

Take
\begin{align*}
f &=  x_0^2\cdot y_0 - (x_0 + x_1)^2 \cdot y_1 +  x_1^2\cdot y_2\\
  &= x_0^2\cdot y_0 - x_0^2\cdot y_1 - 2\cdot x_0\cdot x_1\cdot y_1 - x_1^2\cdot y_1 + x_1^2\cdot y_2.
\end{align*}

We will prove the claims about each rank separately in the following subsections.

\subsection{Border rank}
Let
\begin{align*}
  p_1 & := x_0^3,         &  v_1 & :=    x_0^2\cdot y_0, \\ 
  p_2 & := (x_0+x_1)^3,   &  v_2 & := - (x_0 + x_1)^2 \cdot y_1,\\
  p_3 & := x_1^3,         &  v_3 & :=           x_1^2\cdot y_2,\\
  p_4 &:=(x_0 - x_1)^3,   &  v_4 & := 0,\\
  p_5 &:=(x_0+ 2 x_1)^3,  &  v_5 & := 0.\\
\end{align*}
Then $\fromto{[p_1]}{[p_5]} \in X$ and these points are linearly dependent.
Also $v_i \in \hat T_{p_i} X$, so by Proposition~\ref{prop_r_linearly_dependent points} 
the polynomial $f= v_1 +\dotsb + v_5$ has border rank at most $5$. 
Explicitly, if 
\begin{multline*}
f_t:= \tfrac 1 3 \cdot(x_0+t\cdot y_0)^3 - \tfrac 1 3 \cdot((x_0+x_1)+t\cdot y_1)^3 + \\
- \tfrac{1}{12} \cdot (2x_1-t\cdot y_2)^3 - \tfrac 1 9 \cdot (x_0-  x_1)^3+\tfrac 1 9 \cdot (x_0+ 2 x_1)^3
\end{multline*}
then $f= \lim_{t \to 0} \frac{1}{t} f_t$.

\begin{rem}
   The polynomial $f$ seems to be very special among those that appear in Proposition~\ref{prop_r_linearly_dependent points}
      with $r=5$. But in fact (up to a choice of coordinates) it is a general polynomial of this type.
\end{rem}

\subsection{Cactus rank}

To show that the cactus rank of $f$ is at least $6$ we will prove that there does not exist a scheme $R \subset \PP^4$ of length at most $5$ 
   such that $f \in \langle v_3(R) \rangle$.
Suppose on the contrary, there is such a scheme $R$.
We have
\[
\Ann(f) \supset \ccI(R)
\]
The Hilbert function of the algebra $\Sym V^* / \Ann(f)$ is $(1,5,5,1,0,0,\dots)$ by a direct calculation:
  this algebra has a symmetric Hilbert function (see \cite[Prop.~3.4(v)]{nisiabu_jabu_cactus}),
  and we compute $\ker(S^1 V^* \rightarrow S^2 V)$ to get
\begin{align*}
\Ann(f)_2&=\langle (\beta_0,\beta_1,\beta_2)^2, \phi_1,  \phi_2,  \phi_3,  \phi_4 \rangle, \text{ where}\\
 \phi_1&= \alpha_1\cdot \beta_0, \\
 \phi_2&= \alpha_0\cdot \beta_2, \\
 \phi_3&= -\alpha_0\cdot \beta_1 + \alpha_1\cdot \beta_1, \\
 \phi_4&= \alpha_0\cdot \beta_0 + \alpha_0\cdot \beta_1 + \alpha_1\cdot \beta_2.
\end{align*}
We must have $\ccI(R)_2 = \Ann(f)_2$, 
  because on the one hand $\ccI(R) \subset \Ann(f)$, 
  and on the other hand the length of $R$ is at most $5$,
  so that 
\[
     \dim (\Sym V^*/\ccI(R))_2 \le 5 = \dim (\Sym V^* / \Ann(f))_2.
\]
This implies $R \subset Z(\Ann(f)_2)$ and so $\ccJ \subset \ccI(R)$,
   where $\ccJ$ is the saturation of $(\Ann(f)_2)$.
A direct computation shows that   $(\Ann(f)_2)$ is not saturated. 
Namely, all the $\beta_i$'s are in its saturation. 
As an example, let us see that $\beta_0 \in \ccJ$.
First $\langle \alpha_1, \beta_0, \beta_1, \beta_2\rangle \cdot \beta_0 \subset \Ann(f)_2$.
Also, we can express
$\alpha_0^3\cdot \beta_0$ as
\[
(\alpha_0^2- \alpha_0\cdot\alpha_1)\cdot\phi_4 - (\alpha_0\cdot\alpha_1-\alpha_1^2)\cdot\phi_2+\alpha_0^2\cdot\phi_3+\alpha_0^2\cdot\phi_1
\]
where the $\phi_i$'s are  the last four generators of $\Ann(f)_2$, as defined above.

Since $\ccJ \subset \ccI(R) \subset \Ann(f)$, we have an inequality of the Hilbert functions
\[
  H(\Sym V^* \slash \ccJ) \ge H(\Sym V^* \slash  \ccI(R)) \geq H ( \Sym V^* \slash \Ann(f)).
\]
But we know that $H ( \Sym V^* \slash \Ann(f)) (1) = 5$, whereas $H ( \Sym V^* \slash \ccJ) (1) = 2$, a contradiction.

As a conclusion $cr(f) = 6$,
  because $f$ is in the span of three disjoint double points, which is a scheme of length $6$.

\subsection{Rank}  

The following lemma explains that whenever we have an infinite family of decomposable polynomials in a radical ideal, 
  then we can find  more decomposable polynomials in this ideal.
The geometric meaning of the lemma is the following. 
Suppose $R \subset V$ is a reduced subscheme,
  such that
\[
   R \subset H_{a} \cup  H'_{a}
\]
for  hypersurfaces $H_{a}$, $H'_{a}$ depending algebraically on a parameter $a \in C$ (with $C$ of positive dimension).
Then 
\[
   R \subset \left(\bigcap_{a\in C} H_{a}\right) \cup  \left(\bigcap_{a'\in C} H_{a'}\right)
\]

\begin{lemma}\label{lem_ideal_that_has_many_products}
  Fix an integer $i$.
  Suppose $I \subset \Sym V^*$ is a radical ideal
    and suppose $C$ is a positive dimensional (irreducible) algebraic variety over $k$.
  Suppose we have two rational maps $\beta, \gamma\colon C \dashrightarrow S^{\le i} V^*$,
    $a \mapsto \beta_a , \gamma_a$, 
    such that $\beta_a \gamma_a \in I$.
  Then for all $a, a'\in C$, we have $\beta_a \gamma_{a'} \in I$, 
    whenever $\beta_a$ and $\gamma_{a'}$ are defined.
\end{lemma}

\begin{proof}
  Let $R \subset \PP(V)$ be the algebraic set defined by $I$, so that $I=I(R)$.
  Fix any point $r \in R$.
  Let $U_{\beta} \subset C$ be the open subset of those points $a\in C$, where $\beta$ is defined and $\beta_a(r) \ne 0$.
  Define $U_{\gamma}$ analogously.
  
  Suppose on the contrary to the claim of the lemma, 
    that there exists $a, a'$, such that
    $\beta_a \gamma_{a'}(r) \ne 0$ for some $r \in R$.
  That is $\beta_a(r) \ne 0$ and $\gamma_{a'}(r) \ne 0$,
    and thus $U_{\beta}, U_{\gamma} \ne \emptyset$.
  Since both these sets are open and $C$ is irreducible, it follows that
    $U_{\beta} \cap U_{\gamma} \ne \emptyset$.
  Thus there exists $a \in C$, such that $\beta_a \gamma_a (r) \ne 0$,
    a contradiction with our assumption concerning~$I$.
\end{proof}

The polynomial $f$ is a sum of three cubic polynomials, each of a form $z^2w$, so each has border rank $2$ and rank $3$, i.e.,
  $r(f)\le 3 \cdot 3=9$. 
We claim $r(f) =9$.
Suppose by contradiction $R \subset \PP V$
  is a reduced subscheme of length at most $8$ such that $[f] \in \langle v_3(R) \rangle$.
Then, the defining ideal of $R$ satisfies $I(R) \subset \Ann(f)$,
  so, in particular, there are no linear forms in $I(R)$, and there are no squares among the quadratic forms in $I(R)$.
Moreover, there are at least $7$ quadrics in the ideal, because $7=15-8$, and there are 15 quadrics altogether, and the length of $R$ is at most $8$.
Since $\Ann(f)_2$ is $10$-dimensional
it follows that $\codim_k\bigl(I(R)_2 \subset \Ann(f)_2\bigr) \leq 3$.

By a direct computation we can check that 
\[
  (a_0\beta_0+a_1\beta_1+a_2 \beta_2)(c_0\alpha_0 + c_1\alpha_1) \in \Ann(f)
\]
has a solution for all $[a_0,a_1,a_2]=a \in C \subset \PP^2$ in a plane conic $C$.
Denote by $\beta_{a}:=a_0\beta_0+a_1\beta_1+a_2 \beta_2$ for any $a \in C$.
We already saw that $\langle (\beta_0,\beta_1,\beta_2)^2 \rangle  \subset \Ann(f)$.
Thus for each $a \in C$,  
  there is a four dimensional family of type $\beta_{a} \cdot \gamma \subset \Ann(f)$,
  where $\gamma$ can be a linear form (i.e., a $k$-linear combination of $\alpha_0, \alpha_1, \beta_0,\beta_1,\beta_2$). 
As $I(R)_2$ has codimension at most three in the annihilator, this family intersects non-trivially $I(R)_2$.
That is, for each $a \in C$, there exists a non-zero $\gamma_{a} \in V^*$ such that:
\[
  \beta_{a}\cdot \gamma_{a} \in I(R).
\]
Thus by Lemma~\ref{lem_ideal_that_has_many_products}:
\[
  \beta_{a}\cdot \gamma_{a'} \in I(R)
\]
for any $a, a' \in C$.
In particular, fixing $a'$, and considering linear combinations
  we  obtain $\langle\beta_0, \beta_1, \beta_2 \rangle \gamma_{a'} \subset I(R)$.
Since $I(R) \subset \Ann(f)$,
  this is only possible for $\gamma_{a'} \in \langle\beta_0, \beta_1, \beta_2 \rangle$.
But then $\gamma_{a'}^2 \in I(R)$, which contradicts the radicality of $I(R)$.  

This completes the proof of Theorem~\ref{thm_wild_and_tame_cubics} in the case $\dim V=5$.

\subsection{Higher dimensions}\label{sect_higher_dimension_examples}

A naive way to see that Theorem~\ref{thm_wild_and_tame_cubics} 
    also holds in the case $\dim V \ge 6$, is to consider the same $5$-concise polynomial 
    and apply \eqref{equ_calculate_ranks_for_concise}.
Alternatively, let $V = \langle x_0, x_1, y_0, y_1, y_2 \rangle \oplus W$ and pick any $\dim W$-concise $g \in S^3 W$,
    such that the border rank of $g$ is $\dim W$. We claim that $f + g$ is ``wild'',
    i.e. $\borderrk(f+g) = \dim V = 5 + \dim W$ and $sr(f+g) \ge cr(f+g) \ge \dim V +1 = 6 + \dim W$.
The argument is identical to the one above, using $\Ann(f+g)_2 = \Ann(f)_2 \cap \Ann(g)_2$.

\bibliography{rank_example}

\def\polhk#1{\setbox0=\hbox{#1}%
  {\ooalign{\hidewidth\lower1.5ex\hbox{`}\hidewidth\crcr\unhbox0}}}\def\dbar{\leavevmode\hbox
  to 0pt{\hskip.2ex\accent"16\hss}d}
\begin{thebibliography}{BBKT13}

\bibitem[BB14]{nisiabu_jabu_cactus}
Weronika Buczy{\'n}ska and Jaros{\l}aw Buczy{\'n}ski.
\newblock Secant varieties to high degree {V}eronese reembeddings,
  catalecticant matrices and smoothable {G}orenstein schemes.
\newblock {\em J. Algebraic Geom.}, 23:63--90, 2014.

\bibitem[BBKT13]{nisiabu_jabu_kleppe_teitler_direct_sums}
Weronika Buczy\'nska, Jaros\l{}aw Buczy\'nski, Johannes Kleppe, and Zach
  Teitler.
\newblock Apolarity and direct sum decomposability of polynomials.
\newblock arXiv:1307.3314, 2013.

\bibitem[BBM12]{bernardi_brachat_mourrain_comparison}
Alessandra Bernardi, J{\'e}r{\^o}me Brachat, and Bernard Mourrain.
\newblock A comparison of different notions of ranks of symmetric tensors.
\newblock arXiv: 1210.8169, 2012.

\bibitem[BGI11]{bernardi_gimigliano_ida}
Alessandra Bernardi, Alessandro Gimigliano, and Monica Id{\`a}.
\newblock Computing symmetric rank for symmetric tensors.
\newblock {\em J. Symbolic Comput.}, 46(1):34--53, 2011.

\bibitem[BGL13]{jabu_ginensky_landsberg_Eisenbuds_conjecture}
Jaros{\l}aw Buczy{\'n}ski, Adam Ginensky, and J.~M. Landsberg.
\newblock Determinantal equations for secant varieties and the
  {E}isenbud-{K}oh-{S}tillman conjecture.
\newblock {\em J. Lond. Math. Soc. (2)}, 88(1):1--24, 2013.

\bibitem[BL11]{landsberg_jabu_third_secant}
Jaros\l{}aw Buczy\'nski and Joseph~M. Landsberg.
\newblock On the third secant variety.
\newblock To appear in Journal of Algebraic Combinatorics, arXiv:1111.7005,
  2011.

\bibitem[BL13]{landsberg_jabu_ranks_of_tensors}
Jaros{\l}aw Buczy{\'n}ski and J.M. Landsberg.
\newblock Ranks of tensors and a generalization of secant varieties.
\newblock {\em Linear Algebra Appl.}, 438(2):668--689, 2013.

\bibitem[BO08]{brambilla_ottaviani_on_AH_theorem}
Maria~Chiara Brambilla and Giorgio Ottaviani.
\newblock On the {A}lexander-{H}irschowitz theorem.
\newblock {\em J. Pure Appl. Algebra}, 212(5):1229--1251, 2008.

\bibitem[BR13]{bernardi_ranestad_cactus_rank_of_cubics}
Alessandra Bernardi and Kristian Ranestad.
\newblock On the cactus rank of cubics forms.
\newblock {\em J. Symbolic Comput.}, 50:291--297, 2013.

\bibitem[CCG12]{carlini_catalisano_geramita_Waring_rank_of_monomials}
Enrico Carlini, Maria~Virginia Catalisano, and Anthony~V. Geramita.
\newblock The solution to the {W}aring problem for monomials and the sum of
  coprime monomials.
\newblock {\em J. Algebra}, 370:5--14, 2012.

\bibitem[CN09]{casnati_notari_irreducibility_Gorenstein_degree_9}
Gianfranco Casnati and Roberto Notari.
\newblock On the {G}orenstein locus of some punctual {H}ilbert schemes.
\newblock {\em J. Pure Appl. Algebra}, 213(11):2055--2074, 2009.

\bibitem[CN11]{casnati_notari_irreducibility_Gorenstein_degree_10}
Gianfranco Casnati and Roberto Notari.
\newblock On the irreducibility and the singularities of the {G}orenstein locus
  of the punctual {H}ilbert scheme of degree 10.
\newblock {\em J. Pure Appl. Algebra}, 215(6):1243--1254, 2011.

\bibitem[Eise95]{eisenbud}
David Eisenbud.
\newblock {\em Commutative algebra}, volume 150 of {\em Graduate Texts in
  Mathematics}.
\newblock Springer-Verlag, New York, 1995.
\newblock With a view toward algebraic geometry.

\bibitem[Gotz78]{gotzmann_persistence_theorem}
Gerd Gotzmann.
\newblock Eine {B}edingung f\"ur die {F}lachheit und das {H}ilbertpolynom eines
  graduierten {R}inges.
\newblock {\em Math. Z.}, 158(1):61--70, 1978.

\bibitem[Gree98]{green_generic_initial_ideals}
Mark~L. Green.
\newblock Generic initial ideals.
\newblock In {\em Six lectures on commutative algebra}, volume 166 of {\em
  Progress in Mathematics}, pages 119--186. Birkh\"auser Verlag, Basel, 1998.

\bibitem[IK99]{iarrobino_kanev_book_Gorenstein_algebras}
Anthony Iarrobino and Vassil Kanev.
\newblock {\em Power sums, {G}orenstein algebras, and determinantal loci},
  volume 1721 of {\em Lecture Notes in Mathematics}.
\newblock Springer-Verlag, Berlin, 1999.
\newblock Appendix C by Iarrobino and Steven L. Kleiman.

\bibitem[Land06]{landsberg_border_rank_of_multiplication}
J.~M. Landsberg.
\newblock The border rank of the multiplication of {$2\times2$} matrices is
  seven.
\newblock {\em J. Amer. Math. Soc.}, 19(2):447--459, 2006.

\bibitem[LT10]{landsberg_teitler_ranks_and_border_ranks_of_symm_tensors}
J.~M. Landsberg and Zach Teitler.
\newblock On the ranks and border ranks of symmetric tensors.
\newblock {\em Found. Comput. Math.}, 10(3):339--366, 2010.

\bibitem[RS11]{ranestad_schreyer_on_the_rank_of_a_symmetric_form}
Kristian Ranestad and Frank-Olaf Schreyer.
\newblock On the rank of a symmetric form.
\newblock {\em J. Algebra}, 346:340--342, 2011.

\bibitem[Stan78]{stanley_hilbert_functions_of_graded_algebras}
Richard~P. Stanley.
\newblock Hilbert functions of graded algebras.
\newblock {\em Advances in Math.}, 28(1):57--83, 1978.

\bibitem[Teit13]{teitler_maximum_rank_of_monomials}
Zach Teitler.
\newblock Maximum waring ranks of monomials.
\newblock arXiv: 1309.7834, 2013.

\end{thebibliography}
\bibliographystyle{alpha_four}
\end{document}